\def\vs{\vskip .6cm}
\def\beq{\begin{equation}}
\def\eeq{\end{equation}}
\def\bea{\begin{eqnarray*}}
\def\eea{\end{eqnarray*}}
\def\f{\varphi}
\def\r{\end{proof}}
\def\dt{{\partial_t}}
\def\es{\,\lrcorner\,}
\def\id{\mathrm{id}}
\def\Lie{{\mathcal L}}
\def\SU{\rm{SU}}
\def \RM{\mathbb{R}}
\def \ZM{\mathbb{Z}}
\def \CM{\mathbb{C}}
\newtheorem{ede}{Definition}[section]
\newtheorem{ath}[ede]{Theorem}
\newtheorem{elem}[ede]{Lemma}
\newtheorem{ere}[ede]{Remark}
\newtheorem{ecor}[ede]{Corollary}
\title{Transformations of locally conformally K\"ahler manifolds}
\author{Andrei Moroianu}
\author{Liviu Ornea}
\thanks{This work was accomplished in the framework of the
  Associated
  European Laboratory ``MathMode". L.O. is partially supported by
CNCSIS PNII  grant code 8.}
\address{Centre de Math{\'e}mathiques, Ecole Polytechnique, 91128
  Palaiseau Cedex, France} 
\email{am@math.polytechnique.fr}
\address{Univ. of Bucharest, Faculty of Mathematics,
14 Academiei str, 70109 Bucharest, Romania, and 
Institute of Mathematics ``Simion Stoilow" of the Romanian Academy,
21, Calea Grivitei str., 
010702-Bucharest, Romania.}
\email{lornea@gta.math.unibuc.ro, Liviu.Ornea@imar.ro}
\begin{document}

\begin{abstract}

We consider several transformation groups of a locally conformally
K\"ahler manifold and discuss their inter-relations. Among other
results, we prove that all conformal vector fields on a compact
Vaisman manifold which is neither locally conformally hyperk\"ahler
nor a diagonal Hopf manifold are Killing, holomorphic and that all
affine vector fields with respect to the minimal Weyl connection of a locally
conformally K\"ahler manifold which is neither Weyl-reducible nor
locally conformally hyperk\"ahler are holomorphic and conformal. 

\vs

\noindent

2000 {\it Mathematics Subject Classification}: Primary 53C15, 53C25.

\medskip
\noindent{\it Keywords:} Killing vector field, conformal vector field, affine transformation, locally conformally  K\"ahler
manifold, Gauduchon metric.
\end{abstract}

\maketitle

\section{Introduction}
We briefly present the necessary background for locally conformally
K\"ahler (in short, LCK) geometry. 

In the sequel, $J$ will denote an integrable
complex structure on a connected, smooth
manifold $M^{2n}$ of complex dimension $n\ge 2$. For a Hermitian metric $g$ on $(M,J)$, we denote
by $\nabla^g$  its Levi-Civita connection and by
$\omega$ its fundamental two-form 
$\omega(X,Y):=g(JX,Y)$.

Traditionally, a LCK metric $g$ on the complex manifold $(M,J)$ is
defined by the following ``integrability'' condition satisfied by its
fundamental two-form: 
\beq\label{lee0}d\omega=\theta\wedge\omega,\quad d\theta=0.\eeq 
The closed one-form $\theta$ is called the \emph{Lee form}. Note that,
on manifolds of complex dimension at least $3$, the first equation
implies the closedness of $\theta$, the second condition being
relevant only on complex surfaces. 

Any other Hermitian metric $e^fg$ which is conformal with $g$ is a LCK
metric too, its Lee form  being $\theta+df$. Hence, a LCK metric
determines a $1$-cocycle associated merely to the conformal class
$c=[g]$.  

Clearly, on local open sets on which $\theta$ is exact, $g$ is
conformal with some local K\"ahler metrics which, on overlaps, are
homothetic. The above $1$-cocycle is associated to this system of
scale factors.  

Note that on the universal cover of $M$, the pull-back of $\theta$ is
exact (the pull-backs of the local K\"ahler metrics glue together to a
global K\"ahler metric) and the fundamental group of $M$ acts by
biholomorphic homotheties with respect to this K\"ahler metric. This
is usually taken for definition of a LCK manifold. 

Returning on $M$, the Levi-Civita connection $D^U$ of a local K\"ahler
metric in the conformal class of $g$ on some open set $U$ is related
to the Levi-Civita connection of $g$ by the formula: 
\beq\label{lee}D^U=\nabla^g-\frac 12 (\theta\otimes
\id+\id\otimes\theta -g\otimes \theta^\sharp).\eeq 
It is obvious that, in fact, these connections do not depend on the
particular local K\"ahler metric, \emph{i.e.} they glue together to a
global connection, here denoted $D$, which has the following two
properties: 
$$DJ=0, \quad Dg=\theta\otimes g.$$
The second equation implies that $D$ preserves the conformal class:
$Dc=0$. Being torsion free (as, locally, any Levi-Civita connection),
it is a Weyl connection. Notice that the Lee form of the Weyl
connection $D$ with respect to $g$ defined by (\ref{lee}) coincides
with the Lee form of the complex structure $J$ with respect to $g$
defined by (\ref{lee0}). 

\begin{ede} A \emph{Hermitian-Weyl structure} on $M^{2n}$ is a triple
  $(c,J,D)$ where $c$ is a conformal structure, $J$ is a complex
  structure compatible with $c$ and $D$ is a  Weyl connection such
  that $DJ=0$ and $Dc=0$. 
\end{ede}

From the above, every LCK structure defines a Hermitian-Weyl structure
on $M$, and conversely, every metric in the conformal class of a
Hermitian-Weyl structure is LCK, provided $n\ge 3$. Notice that the
Weyl connection $D$ is uniquely defined by $J$ (see {\em e.g.} \cite[
Lemma 5.1]{bm}). We call $D$ the {\em minimal} Weyl connection.

In general, there is no way to choose a ``canonical" metric in the
conformal class of a Hermitian-Weyl manifold $(M,c,J,D)$, unless $M$
is compact, where there exists a unique, up to homothety, metric $g_0$
in $c$ such that the Lee form $\theta_0$ of $D$ with respect to $g_0$
is co-closed (and hence harmonic), see \cite[p.502]{gau}. We call
$g_0$ the \emph{the Gauduchon metric}. 

A particular class of LCK manifolds are the Vaisman manifolds. These
are defined by the condition $\nabla^g\theta=0$. By the uniqueness up
to homotheties, a Vaisman metric is, necessarily, the Gauduchon metric
in its conformal class. The prototype of Vaisman manifolds are the
Hopf manifolds $\CM^n\setminus\{0\}/\ZM$, with $\ZM$ generated by a
semi-simple endomorphism (see \cite{ov} for the structure of compact
Vaisman manifolds). On the other hand, there exist examples of compact
LCK manifolds which do not admit any Vaisman metric: such are the
Inoue surfaces and the non-diagonal Hopf surfaces, see \cite{be}. 

The hypercomplex  version of LCK geometry is straightforward. One
starts with a hypercomplex manifold $(M,I,J,K)$ endowed with a
conformal class $[c]$ and with a Weyl connection $D$ satisfying: 
$$Dc=0, \quad DI=DJ=DK=0.$$
One obtains the notion of hyperhermitian-Weyl structure. It can be
seen that for any $g\in c$, the Lee forms associated to the three complex structures 
coincide. As above, the Lee form is closed in real dimension
at least $8$, and hence, with the exception of dimension $4$, \emph{a
  hyperhermitian-Weyl manifold is locally conformally hyperk\"ahler
  (LCHK)}. 

The local K\"ahler metrics of a LCHK manifold are now hyperk\"ahler,
hence Ricci-flat. It follows that LCHK manifolds are necessarily
Einstein-Weyl. But on compact Einstein-Weyl manifolds, the Gauduchon
metric is parallel, see \cite{gau95}. Hence, a compact LCHK manifold
bears three nested Vaisman structures. 

For examples and properties of LCK and LCHK manifolds, we refer to
\cite{do}, to more recent papers by Ornea and Verbitsky and to the
references therein. 

\begin{ere} {\rm 
 In the whole paper, we tacitly assume that the manifolds we consider
 are not globally conformally K\"ahler, \emph{i.e.} the Lee form is
 never  exact. This is especially important on compact manifolds,
 where LCK and K\"ahler structures impose completely different
 topologies.  } 
\end{ere}

We now consider the following transformation groups on a
Hermitian-Weyl manifold $(M,c,J,D)$:  
\begin{itemize}
\item $\mathrm{Aff}(M,D)$, the group of affine transformations, i.e. preserving the Weyl connection. 
\item $\mathrm{H}(M,J)$, the group of biholomorphisms with respect to $J$.
\item $\mathrm{Conf}(M,c)$, the group of conformal transformations. 
\item $\mathrm{Aut}(M)$:= $\mathrm{Hol}(M,J)\cap \mathrm{Conf}(M,c)$,
  the group of automorphisms.
\end{itemize}

It is well known that these are Lie groups. Their Lie algebras will be
denoted, respectively by: $\mathfrak{aff}(M,D)$, $\mathfrak{h}(M,J)$,
$\mathfrak{conf}(M,c)$, $\mathfrak{aut}(M)$.  

On Vaisman manifolds, the Lee field $\theta^\sharp$ is Killing and
real-analytic but on non-Vaisman LCK manifolds, almost no information
about these groups is available. 
 It is the purpose of this note to clarify some relations among the
 above groups. Essentially, we prove that: 
\begin{itemize}
 \item $\mathfrak{aff}(M,D)=\mathfrak{aut}(M)$, provided that
   $\mathrm{Hol}_0(D)$ is irreducible and $M$ is not LCHK (Corollary
   \ref{af_field}). 
\item $\mathfrak{conf}(M,c)=\mathfrak{aut}(M)$ on compact Vaisman
  manifolds which are neither LCHK nor diagonal Hopf manifolds
  (Theorem \ref{ckh}).  
\end{itemize}

Note that on compact LCHK manifolds and on Hopf manifolds there exist
examples of affine transformations which are not holomorphic, see
Remark \ref{ir_vai} (ii) below. 

We believe that the second equality holds on all compact LCK manifolds
which are neither LCHK nor diagonal Hopf manifolds. By Lemma
\ref{kill_gau} below, this amounts to prove that every conformal
vector field is Killing for the Gauduchon metric, which is the LCK
counterpart of the result saying that every conformal vector field on
a compact K\"ahler manifold is Killing (\cite[\S 90]{li}).

\section{Affine vector fields on LCK manifolds}

Our first result is the LCK analogue of \cite[\S 54]{li},  (see also
\cite{kn}). 
 
\begin{elem}  
Let $(M,c,J,D)$ be a LCK manifold which is not locally conformally
hyperk\"ahler and such that $\mathrm{Hol}_0(D)$ is irreducible. Then
any $f\in  \mathrm{Aff}(M,D)$ is holomorphic or anti-holomorphic.  
\end{elem}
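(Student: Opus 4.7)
My plan is to adapt Lichnerowicz's Kähler argument \cite{li} to the Weyl setting, with $D$ playing the role of the Levi-Civita connection of a global Kähler metric. The starting observation will be that since $f$ preserves $D$, one has $D(f^*T)=f^*(DT)$ for every tensor field $T$; applied to the Hermitian-Weyl identity $DJ=0$, this yields a second $D$-parallel almost complex structure $\tilde J:=f^*J$ on $M$. The goal then becomes to prove that $\tilde J=\pm J$.

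I would then fix a base point $p\in M$ and consider the commutant $\mathcal{A}\subset\mathrm{End}(T_pM)$ of the restricted holonomy $\mathrm{Hol}_0(D)_p$. Since parallel endomorphism fields correspond bijectively, via evaluation at $p$, to holonomy-invariant endomorphisms of $T_pM$, both $J_p$ and $\tilde J_p$ will lie in $\mathcal{A}$. As $\mathrm{Hol}_0(D)_p$ acts irreducibly on $T_pM$ by hypothesis, Schur's lemma over $\RM$ will force $\mathcal{A}$ to be a finite-dimensional real division algebra, hence one of $\RM$, $\CM$, or $\HM$. The case $\mathcal{A}=\RM$ is excluded by the presence of $J_p$ (whose square is $-\id$). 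If $\mathcal{A}=\HM$, it will contain two further $D$-parallel complex structures $I,K$ anticommuting with $J$; passing to a local trivialisation in which $D$ is the Levi-Civita connection of a Kähler metric in $c$, these produce a local hyperkähler structure, so $(c,I,J,K,D)$ is hyperhermitian-Weyl on $M$. Because the three associated Lee forms coincide with that of $J$, which is closed, this forces $M$ to be LCHK, contradicting the hypothesis. Hence $\mathcal{A}=\CM=\mathrm{span}_\RM(\id,J_p)$.

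It will then remain to write $\tilde J_p=a\,\id+b\,J_p$ with $a,b\in\RM$; imposing $\tilde J_p^2=-\id$ gives $a=0$ and $b=\pm1$. Since $\tilde J$ and $\pm J$ are both $D$-parallel and coincide at $p$, they will agree throughout the connected manifold $M$, yielding $f^*J=\pm J$, which is exactly the desired conclusion. The step I expect to require most care is the exclusion of the $\mathcal{A}=\HM$ case: one must justify that three $D$-parallel anticommuting complex structures in the holonomy commutant genuinely produce an LCHK structure on $M$, which relies on the local Kähler reduction of the Weyl connection together with the observation from the introduction that the Lee form of $D$ is independent of the compatible complex structure.
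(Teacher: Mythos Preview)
Your approach is valid but genuinely different from the paper's. Both proofs begin with the observation that $\tilde J:=f^*J$ is $D$-parallel, but diverge from there. The paper works directly with the product $J\tilde J$: it splits $J\tilde J=S+A$ into its $c$-symmetric and $c$-skew parts, uses irreducibility to force $S=k\,\id$ and $A^2=p\,\id$, and then observes that $A\neq 0$ would make $K:=A/\sqrt{-p}$ a single $D$-parallel $c$-orthogonal complex structure anticommuting with $J$, yielding LCHK. Your route is more structural: you invoke Schur and Frobenius to classify the full commutant $\mathcal A$ of $\mathrm{Hol}_0(D)$ as $\RM$, $\CM$, or $\HM$, and then eliminate $\HM$ globally. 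The paper's argument is shorter and more self-contained (no Frobenius theorem), while yours explains conceptually why exactly three cases arise and why LCHK is the only obstruction.

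There is one point in your $\HM$ case that needs tightening. To conclude that $(c,I,J,K,D)$ is hyperhermitian-Weyl (and hence LCHK), you need $I$ and $K$ to be compatible with $c$, i.e.\ $c$-skew. A $D$-parallel complex structure is not automatically $c$-orthogonal, and your ``local K\"ahler reduction'' does not supply this: three $\nabla^g$-parallel anticommuting almost complex structures on a K\"ahler manifold need not all be $g$-orthogonal. The fix is exactly the ingredient the paper uses: since $\mathrm{Hol}_0(D)$ preserves $c$, the commutant $\mathcal A$ is closed under the $c$-adjoint; writing any $K\in\mathcal A$ with $K^2=-\id$ as $K_++K_-$ (symmetric plus skew), the symmetric part $K_+$ is $D$-parallel and hence $\mu\,\id$ by irreducibility, and $K^2=-\id$ then forces $\mu=0$. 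Once you insert this step, your argument is complete.
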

\begin{proof}
Let $f\in  \mathrm{Aff}(M,D)$ and let $J'_x:=(d_xf)^{-1}\circ
J_{f(x)}\circ (d_xf)$ denote the image by $f$ of the complex structure
$J$. Then ${J'}^2=-\id$ and, as $J$ and $df$ commute with the parallel
transport induced by $D$, we easily derive that $J'$ is $D$-parallel
too.  

Consider the decomposition of $JJ'$ into symmetric and skew-symmetric
parts: $JJ'=S+A$, where 
$$
\begin{cases}
S:=\frac 12 (JJ'+J'J),\\
A:=\frac 12 (JJ'-J'J).
\end{cases}
$$
Since $S$ is $D$-parallel, its eigenvalues are constant and the
corresponding eigenbundles are $D$-parallel. The irreducibility
assumption implies $S=k\id$ for some $k\in\RM$. Similarly, the
$D$-parallel symmetric endomorphism $A^2$ has to be a multiple of the
identity: $A^2=p\id$.  

If $A$ were non-zero, $A(X) \ne 0$ for some $X\in TM$, so
$$0>-c(AX,AX)=c(A^2X,X)=pc(X,X),$$
whence $p<0$. The endomorphism $K:=A/\sqrt{-p}$ is then a $D$-parallel
complex structure and satisfies $KJ=-JK$, so $(J,K)$ defines a LCHK
structure on $(M,c)$, which is forbidden by the hypothesis. Thus $A=0$
and $JJ'=k\id$, so $J'=-kJ$. Using ${J'} ^2=-\id$ we get $k=\pm 1$, so
$J'=\pm J$, which just means that $f$ is holomorphic or
anti-holomorphic.

\end{proof}

In a similar manner one can prove the following 
\begin{elem}  
Let $(M, c, J, D)$  be a LCK manifold such that $\mathrm{Hol}_0(D)$ is
irreducible. Then any $f\in  \mathrm{Aff}(M,D)$ is conformal.  
\end{elem}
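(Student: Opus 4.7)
The plan is to adapt the argument of the preceding lemma, with the role of the almost complex structure $J'$ now played by a $g$-symmetric endomorphism that encodes the pulled-back metric. Fix any $g\in c$ and set $g_1:=f^*g$. Define $B\in\End(TM)$ by
$$g_1(X,Y)=g(BX,Y),$$
so that $B$ is $g$-symmetric and positive definite; in particular, its pointwise eigenvalues are positive real numbers and the corresponding eigenspaces span $TM$.

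\textbf{Step 1: derive a covariant equation for $B$.} Since $f$ preserves $D$, the operator $D$ commutes with $f^*$ on tensors, whence
$$Dg_1=D(f^*g)=f^*(Dg)=f^*(\theta\otimes g)=(f^*\theta)\otimes g_1.$$
On the other hand, expanding $Dg_1=D(g(B\cdot,\cdot))$ by the Leibniz rule and using $Dg=\theta\otimes g$, a direct computation gives $(D_Xg_1)(Y,Z)=\theta(X)g_1(Y,Z)+g((D_XB)Y,Z)$. Comparing the two expressions yields
$$D_XB=\eta(X)\,B,\qquad \eta:=f^*\theta-\theta.$$

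\textbf{Step 2: eigenspaces of $B$ are $D$-parallel.} The relation $D_XB=\eta(X)B$ shows that along any curve $\gamma$ the $D$-parallel transport sends the $\lambda$-eigenspace of $B_{\gamma(0)}$ into the $\bigl(e^{\int\eta}\lambda\bigr)$-eigenspace of $B_{\gamma(t)}$. Since eigenvalues vary continuously, the label of the eigenspace cannot jump, so each eigenbundle of $B$ is preserved by $D$-parallel transport and hence invariant under $\mathrm{Hol}_0(D)$.

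\textbf{Step 3: conclude by irreducibility.} The hypothesis that $\mathrm{Hol}_0(D)$ acts irreducibly on $TM$ forces $B$ to have a single eigenvalue at every point, so $B=\lambda\,\id$ for some smooth positive function $\lambda$. Therefore $f^*g=\lambda g$, which precisely means that $f$ is conformal.

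The only delicate point is the identity $D_XB=\eta(X)B$ in Step~1: it is a direct but bookkeeping-heavy computation, essentially the observation that $B$ intertwines two metrics on which $D$ acts as a Weyl connection, with Lee forms $\theta$ and $f^*\theta$. Once it is established, the rest of the argument is strictly parallel to the irreducibility reasoning used in the preceding lemma, and in particular does not require the non-LCHK hypothesis.
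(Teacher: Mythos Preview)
Your proof is correct and follows essentially the same route as the paper. The only difference is one of packaging: the paper pulls back the conformal structure $c$ itself rather than a representative metric $g\in c$, so that the resulting symmetric endomorphism $B$ (well-defined up to a positive scalar, hence normalizable to unit determinant) is genuinely $D$-parallel rather than satisfying $D_XB=\eta(X)B$; the eigenbundle/irreducibility step is then identical to yours.
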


\begin{proof}
Since $f$ is affine, the pull-back by $f$ of the conformal structure
$c$ is a $D$-parallel conformal structure $c'$. The symmetric
endomorphism $B$ of $TM$ defined by $c'(X,Y)=c(BX,Y)$ is
$D$-parallel. The irreducibility of $\mathrm{Hol}_0(D)$ shows as
before that $B$ is a multiple of the identity. 
\end{proof}

\begin{ecor}\label{af_field}
Let $(M,c,J,D)$ be a LCK manifold which is not LCHK and such that
$\mathrm{Hol}_0(D)$ is irreducible. Then any infinitesimal affine
transformation of the Weyl connection is an infinitesimal
automorphism, {\em i.e.} $\mathfrak{aff}(M,D)=\mathfrak{aut}(M)$.  
\end{ecor}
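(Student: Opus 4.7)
The plan is simply to pass from the group-level statements in the two preceding lemmas to the Lie-algebra level by integrating. Let $X\in\mathfrak{aff}(M,D)$ and denote by $\{f_t\}$ its local flow; by definition each $f_t$ preserves $D$, so $f_t\in\mathrm{Aff}(M,D)$ on its domain of definition.

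By the first lemma, each $f_t$ is either holomorphic or anti-holomorphic, so the pull-back $J_t:=f_t^*J$ satisfies $J_t=\epsilon(t)J$ with $\epsilon(t)\in\{+1,-1\}$. Since $f_0=\mathrm{id}$, we have $\epsilon(0)=+1$, and since the map $t\mapsto J_t$ is continuous (in fact smooth) while $\{+1,-1\}$ is discrete, we must have $\epsilon(t)=+1$ for all small $t$. Differentiating $f_t^*J=J$ at $t=0$ gives $\Lie_XJ=0$, hence $X\in\mathfrak{h}(M,J)$. The second lemma applied to $f_t$ yields a smooth positive function $\lambda_t$ and a metric $g\in c$ with $f_t^*g=\lambda_t g$; differentiating at $t=0$ produces $\Lie_Xg=\mu g$ for some function $\mu$, i.e.\ $X\in\mathfrak{conf}(M,c)$. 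Combining these two conclusions gives $X\in\mathfrak{h}(M,J)\cap\mathfrak{conf}(M,c)=\mathfrak{aut}(M)$, which is the nontrivial inclusion.

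The converse inclusion $\mathfrak{aut}(M)\subseteq\mathfrak{aff}(M,D)$ is immediate from the uniqueness of the minimal Weyl connection recalled in the introduction: since $D$ is determined by the pair $(c,J)$, any diffeomorphism preserving both $c$ and $J$ automatically preserves $D$, so the flow of any $X\in\mathfrak{aut}(M)$ consists of affine transformations of $D$.

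There is no serious obstacle; the only point deserving attention is the handling of the sign ambiguity $J'=\pm J$ coming from Lemma~2.4, which is resolved by the continuity argument above (equivalently, by noting that the set of $t$ for which $f_t$ is holomorphic is open, closed, and non-empty in the connected interval of definition of the flow).
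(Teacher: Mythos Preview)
Your argument is correct and is exactly the intended one: the paper states the corollary without proof, as an immediate consequence of the two preceding lemmas, and your passage from the group-level statements to the Lie-algebra level via the local flow (together with the continuity argument to exclude the anti-holomorphic case) is the standard and correct way to make this explicit. The reverse inclusion you supply via the uniqueness of the minimal Weyl connection is also the right justification.
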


\begin{ere}\label{ir_vai}
{\rm $(i)$ We do not know whether the assumption of irreducibility of
  the Weyl connection can be relaxed or not, at least on compact
  $M$. But we  can show that \emph{a compact Vaisman manifold, which
    is not a (diagonal) Hopf manifold, is Weyl-irreducible}. Indeed,
  by the structure theorem in \cite{ov}, $M$ is a mapping torus of a
  Sasakian isometry and its universal cover is a K\"ahlerian cone over
  the compact, hence complete, Sasakian fibre. Then, as $D$ is,
  locally, the Levi-Civita connection of the local K\"ahler metrics,
  if $D$ is reducible, the K\"ahler metric of the covering cone is
  reducible. Now, by Proposition 3.1 in \cite{gal}, a cone over a
  complete manifold is reducible if and only if it is flat. But a flat
  cone is the cone over a sphere, hence $M$ is a Hopf manifold.  

$(ii)$ Let $W$ be a 3-Sasakian manifold. Then $M:=W\times S^1$ is LCHK
(for $W=S^{2n-1}$, $M$ is a Hopf manifold) and each of the three Killing
fields on $W$ which generate the $\SU(2)$ action, induce Killing,
hence affine with respect to the Weyl connection, fields on $M$ and
each of them is holomorphic only with respect to one of the three complex
structures (see \emph{e.g.} \cite[Chapter 11]{do}).} 

\end{ere}

We apply the above to prove:

\begin{elem}\label{kill_gau}
Let $(M,c,J, D)$ be a compact LCK manifold which is not LCHK and such
that $\mathrm{Hol}_0(D)$ is irreducible. Then every Killing vector field
$\xi$ of the Gauduchon metric is a holomorphic vector field. 
\end{elem}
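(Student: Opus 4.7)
My plan is to reduce the statement to Corollary \ref{af_field}: if I can show that the Killing field $\xi$ is automatically affine with respect to the minimal Weyl connection $D$, then Corollary \ref{af_field} immediately yields $\xi \in \mathfrak{aut}(M) \subseteq \mathfrak{h}(M,J)$, so $\xi$ is holomorphic. Since $\mathcal{L}_\xi g_0 = 0$ already implies $\mathcal{L}_\xi \nabla^{g_0} = 0$, formula (\ref{lee}) reduces the problem to showing that $\xi$ also preserves the Lee form $\theta_0$ of $D$ relative to $g_0$.

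To prove $\mathcal{L}_\xi\theta_0 = 0$ I would invoke the characterising property of the Gauduchon metric. By definition $\theta_0$ is co-closed, and being also closed (as the Lee form of an LCK structure) it is harmonic for $g_0$. Since the flow of $\xi$ consists of isometries of $g_0$, it commutes with the Hodge Laplacian, and hence $\mathcal{L}_\xi\theta_0$ is again harmonic. On the other hand, Cartan's formula together with $d\theta_0 = 0$ gives
\[
\mathcal{L}_\xi\theta_0 \;=\; d(\iota_\xi\theta_0) + \iota_\xi d\theta_0 \;=\; d(\theta_0(\xi)),
\]
so $\mathcal{L}_\xi\theta_0$ is exact. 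A harmonic exact $1$-form on a compact Riemannian manifold must vanish, so $\mathcal{L}_\xi\theta_0 = 0$, and consequently also $\mathcal{L}_\xi\theta_0^\sharp = 0$.

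With $\mathcal{L}_\xi g_0 = 0$, $\mathcal{L}_\xi\theta_0 = 0$ and $\mathcal{L}_\xi\theta_0^\sharp = 0$ in hand, every tensor on the right-hand side of (\ref{lee}) is $\xi$-invariant, so $\mathcal{L}_\xi D = 0$, i.e.\ $\xi \in \mathfrak{aff}(M,D)$. The present hypotheses (non-LCHK and $\mathrm{Hol}_0(D)$ irreducible) are precisely those needed by Corollary \ref{af_field}, which delivers the desired conclusion. The only non-formal step is the harmonicity argument in the previous paragraph, where compactness enters twice in an essential way: to guarantee the existence of the Gauduchon metric $g_0$, and to force the harmonic exact $1$-form $\mathcal{L}_\xi\theta_0$ to vanish. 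Everything else is a routine assembly using (\ref{lee}) and Corollary \ref{af_field}.
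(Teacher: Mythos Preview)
Your proposal is correct and follows essentially the same route as the paper: reduce to Corollary \ref{af_field} by showing $\mathcal{L}_\xi\theta_0=0$, which you (like the paper) obtain by observing that $\mathcal{L}_\xi\theta_0$ is simultaneously harmonic and exact on the compact manifold. The only cosmetic difference is that the paper shows $\mathcal{L}_\xi\theta_0$ is closed and coclosed separately (via $[\mathcal{L}_\xi,d]=0$ and $[\mathcal{L}_\xi,\delta]=0$), whereas you invoke $[\mathcal{L}_\xi,\Delta]=0$ directly; these are equivalent formulations of the same argument.
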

\begin{proof}
According to Corollary \ref{af_field}, it is enough to prove that $\xi$ is
affine. As the Weyl connection is uniquely defined by the Gauduchon
metric and its Lee form $\theta_0$, it will be enough to show that  
\begin{equation}\label{unu}
\Lie_\xi\theta_0=0.
\end{equation} 
But, as $\xi$ is $g_0$-Killing, the codifferential $\delta$ of $g_0$
commutes with $\Lie_\xi$, hence
$\delta(\Lie_\xi\theta_0)=\Lie_\xi(\delta\theta_0)=0$, because
$\theta_0$ is co-exact. Now, $\theta_0$ is closed, so
$d(\Lie_\xi\theta_0)=0$, {\em i.e.} $\Lie_\xi\theta_0$ is harmonic. On
the other hand, by Cartan's formula, $\Lie_\xi\theta_0=d(\xi\es
\theta_0)$. On a compact Riemannian manifold, a one-form which is both
exact and harmonic necessarily vanishes, so \eqref{unu} is proved. 
\end{proof}

\section{Conformal vector fields on Vaisman manifolds}

As mentioned in the introduction, every conformal vector field on a
compact K\"ahler manifold is Killing (\cite[\S 90]{li}). We consider
the LCK analogue of this statement: {\em Is every conformal vector
  field on a compact LCK manifold Killing with respect to the
  Gauduchon metric?}

As no sphere $S^{2n}$ can bear an LCK metric for $n\ge 2$ 
($S^{2n}$ being simply connected,
such a structure would be automatically K\"ahler), by Obata's theorem
we derive that \emph{any conformal vector field on a compact LCK
  manifold is Killing for some metric in the given conformal class,
  but not necessarily the Gauduchon metric}. In fact, an argument
entirely similar to the above, shows that a conformal vector field is
Killing with respect to the Gauduchon metric if and only if it
preserves the Lee form of the metric with respect to which it is
Killing. 

The answer to the above question is not known to us in
general. Nevertheless, we can show that  
it holds on compact Vaisman manifolds, on which every conformal field
is Killing with respect to the Gauduchon metric. This, actually,
follows from a more general statement: 

\begin{ath}\label{mt}
Let $\f$ be an isometry of a compact Riemannian manifold $(W,h)$ and
let $(M,g):=(W,h)\times\RM/_{\{(x,t)\sim(\f(x),t+1)\}}$ be the
mapping torus of $\f$. Then every conformal vector field on $(M,g)$ is
Killing.  
\end{ath}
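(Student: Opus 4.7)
My plan is to lift $X$ to the canonical covering $\pi\colon\tilde M:=W\times\RM\to M$, whose deck transformation $\gamma(x,t)=(\f(x),t+1)$ is an isometry of $\tilde g:=h+dt^2$. The lifted conformal vector field $\tilde X$ is $\gamma$-invariant and decomposes uniquely as $\tilde X=Y+a\,\dt$ with $Y(\cdot,t)$ tangent to the $W$-factor and $a\in C^\infty(\tilde M)$; I denote the lifted conformal factor by $\tilde f$.

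The first step is to unfold $\Lie_{\tilde X}\tilde g=2\tilde f\tilde g$ in local product coordinates. Its three components should yield, respectively: $\tilde f=\dt a$ (normal-normal part), $\dt Y=-\n^W a$ (mixed part), and the statement that $Y(\cdot,t)$ is a conformal vector field on the compact manifold $(W,h)$ with factor $\tilde f(\cdot,t)$ for every fixed $t$ (tangential part). Since $h$ does not depend on $t$, I can then $\dt$-differentiate the tangential equation and combine with the previous two to obtain the key identity
\[
\mathrm{Hess}_W(a)=-(\partial_t^2 a)\,h,
\]
whose trace reads $\Delta_W a=-(\dim W)\,\partial_t^2a$.

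The finishing step will be an integration by parts on the fundamental domain $W\times[0,1]$ for $\gamma$. One has
\[
\int_M f^2\,\mathrm{dvol}_g=\int_0^1\!\!\int_W(\dt a)^2\,dV_h\,dt;
\]
integrating by parts in $t$, substituting $\partial_t^2a=-\Delta_Wa/\dim W$, and then integrating by parts on the closed manifold $W$ should turn the right-hand side into
\[
-\frac{1}{\dim W}\int_0^1\!\!\int_W|\n^W a|^2\,dV_h\,dt,
\]
which is manifestly nonpositive. Compared with the nonnegative left-hand side this forces $\dt a\equiv0$, hence $\tilde f\equiv0$, and $X$ is Killing.

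The only genuinely delicate step is the vanishing of the $t$-boundary term $\int_W[a\,\dt a]_0^1\,dV_h$ arising in the first integration by parts: this is the one place where the hypothesis that $\f$ is an $h$-isometry (as opposed to merely a diffeomorphism) is essential, via the $\gamma$-invariance relation $a(\cdot,1)=a(\f^{-1}(\cdot),0)$ combined with $\f^*dV_h=dV_h$. Everything else should reduce to a formal computation with the product-metric Levi-Civita connection.
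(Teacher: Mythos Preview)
Your proposal is correct and follows essentially the same approach as the paper: lift to $W\times\RM$, decompose the conformal Killing equation along the product, differentiate in $t$ to obtain $\mathrm{Hess}_W(a)=-(\partial_t^2 a)h$, and finish by a double integration by parts on $W\times[0,1]$ where the $t$-boundary term vanishes by $\gamma$-invariance and the isometry hypothesis on $\f$. The only cosmetic difference is that the paper integrates by parts first in the $W$-direction and then in $t$ (packaging the argument via $b=ma\dot a$ and showing $\int_W\dot b\ge 0$ while $\int_0^1\int_W\dot b=0$, which also yields $da=0$), whereas you integrate by parts in $t$ first; the conclusions $\dt a=0$ and hence $\tilde f=0$ are the same.
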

\begin{proof}
In \cite{ms} it is shown that every twistor form on a Riemannian
product is defined by Killing forms on the factors. We adapt the
argument there to the situation of mapping tori.  

Every conformal vector field on $(M,g)$ induces a conformal vector
field denoted $\xi$ on the covering space $W\times\RM$ of $M$, endowed
with the product metric $\tilde g:=h+dt ^2$. We write $\xi$ as 
$$\xi=a\dt+\eta,$$ 
where $\dt=\partial/\partial t$, $\eta$ is tangent to $W$ and $a$
is a function. Both $a$ and $\eta$ can be viewed as objects on $W$
indexed upon the parameter $t$ on $\RM$. Since $\xi$ is invariant under
the isometry $(x,t)\mapsto(\f(x),t+1)$ of $W\times \RM$, we get in
particular $a(w,t)=a(\f(w), t+1),$
and thus, if we write $a_t(w)$ for $a(w,t)$,
\beq\label{ab}a_{t}=\f^*a_{t+1}, \quad \dot a_{t}=\f^*\dot a_{t+1},\qquad
\forall\ (w,t)\in W\times\RM.\eeq
Now, $\xi$ being conformal, there exists a function $f$ on $W\times\RM$
such that $\Lie_\xi \tilde
g=f\tilde g$. This can be written 
\begin{equation}\label{doi}
\tilde g(\nabla_A\xi,B)+\tilde g(A,\nabla_B\xi)=f\tilde g(A,B)
\end{equation}
for every tangent vectors to $W\times\RM$, $A$ and $B$.
Taking $A=B=\dt$ in (\ref{doi}) gives
$$f=2\tilde g(\nabla_\dt\xi,\dt),$$
hence, by the parallelism of $\dt$, one has
\beq\label{fa}f=2\dt(a)=2\dot a.\eeq
Applying the same equation \eqref{doi} on pairs $(\dt,X)$, $(Y,Z)$, with
$X,Y,Z$ tangent to $W$ and independent on $t$, we obtain, respectively: 
\begin{equation*}
\nabla_{\dt}\eta+da=0,
\end{equation*}
and
\begin{equation*}
h(\nabla_Y\eta,Z)+h(Y,\nabla_Z\eta)=fh(Y,Z).
\end{equation*}
Notice that here $da$ denotes the exterior derivative of $a$ on $W$
and should be understood as a family of 1-forms on $W$ indexed on $t$,
identified by $h$ with a family of vector fields on $W$.
Taking (\ref{fa}) into account, the two equations above become: 
\begin{equation}\label{tr}
\begin{cases}
\dot \eta=-da,\\
h(\nabla^W_Y\eta,Z)+h(Y,\nabla^W_Z\eta)=2 \dot ah(Y,Z) 
\end{cases}
\end{equation}
As the vector fields $Y,Z$ are independent on $t$, we can take the
derivative with respect to $t$ in the second equation of \eqref{tr}: 
$$h(\nabla^W_Y\dot \eta,Z)+h(Y,\nabla^W_Z \dot \eta)=2\ddot a h(Y,Z).$$
Letting here $Y=Z=E_i$, where $\{E_i\}$ is a local orthonormal basis
on $W$, we obtain  
$$\delta^W \dot \eta=-m\ddot a, \qquad (m=\dim W).$$
Using the first equation in \eqref{tr}, we finally obtain:
\beq\label{del}\delta^W d a=m\ddot a.\eeq
We use this equation to show that $a$ is constant. To this end, we
integrate $\Vert da\Vert^2$ on $W$: 
$$\int_Wh(da,da)=\int_W(\delta^W d a) a=m\int_Wa\ddot a=m\int_W(a\dot
a)'-m\int_W(\dot a)^2.$$ 
If we let $b:=ma\dot a$, then, from the above equation we have 
\beq\label{ab1}\int_W\dot b=\int_Wh(da,da)+m\int_W(\dot a)^2\ge0.\eeq 
On the other hand, taking $t=0$ in (\ref{ab}) yields
$$\int_Wb_{0}=\int_W\f^*b_1=\int_Wb_1,$$
because $\f$ is a diffeomorphism. We can therefore compute
$$\int_{W\times [0,1]}\dot b=\int_0^1\bigg(\int_W\dot
b\bigg)dt=\int_Wb_{1}-\int_Wb_0=0.$$ 
By (\ref{ab1}), $\dot a=0$ and $da=0$. As $M$ is connected, $a$ is
constant. It follows that $f=0$ and thus $\xi$ is Killing. 

\end{proof}

We apply the previous result to compact Vaisman manifolds which, by
the structure theorem in \cite{ov}, are mapping tori with compact,
Sasakian fibre. As a direct consequence of Remark \ref{ir_vai}(i),
Lemma \ref{kill_gau} and Theorem \ref{mt} we get: 

\begin{ath}\label{ckh}
Any conformal vector field on a compact Vaisman manifold which is
neither LCHK nor a diagonal Hopf manifold is Killing and holomorphic. 
\end{ath}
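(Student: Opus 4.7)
The plan is to assemble Theorem \ref{ckh} essentially as a corollary of the three results already in place: the structure theorem of \cite{ov}, Theorem \ref{mt}, Remark \ref{ir_vai}(i), and Lemma \ref{kill_gau}. Let $(M,c,J,D)$ be a compact Vaisman manifold which is neither LCHK nor a diagonal Hopf manifold, and let $g$ be its Vaisman metric in $c$. Recall from the introduction that, by uniqueness up to homothety, $g$ coincides with the Gauduchon metric $g_0$.

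First I would invoke the structure theorem from \cite{ov} to realise $(M,g)$ as the mapping torus of a Sasakian isometry $\f$ of a compact Sasakian manifold $W$: this is precisely the set-up of Theorem \ref{mt}. Given any conformal vector field $\xi \in \mathfrak{conf}(M,c)$, it is in particular conformal for the representative $g$, so Theorem \ref{mt} applies and yields that $\xi$ is Killing with respect to $g = g_0$.

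Second, I would invoke Remark \ref{ir_vai}(i): since $M$ is a compact Vaisman manifold which is not a (diagonal) Hopf manifold, the Weyl connection $D$ is irreducible, i.e.\ $\mathrm{Hol}_0(D)$ is irreducible. The hypotheses of Lemma \ref{kill_gau} (compact LCK, not LCHK, Weyl-irreducible) are therefore all satisfied. Applying Lemma \ref{kill_gau} to the Killing vector field $\xi$ produces that $\xi$ is holomorphic. Hence $\xi$ is both Killing and holomorphic, which is the conclusion of Theorem \ref{ckh}.

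Since every non-trivial step is already carried out elsewhere in the paper, there is no real obstacle to overcome here; the only point worth checking carefully is that the Vaisman metric one feeds into Theorem \ref{mt} is the same representative of $c$ that one feeds into Lemma \ref{kill_gau}, namely the Gauduchon metric $g_0$. This identification is guaranteed by the Vaisman condition $\nabla^g \theta = 0$, which implies $\delta\theta = 0$, so $\theta$ is the harmonic (hence Gauduchon) Lee form and the proof closes cleanly.
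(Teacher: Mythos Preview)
Your proposal is correct and follows exactly the route the paper takes: it derives Theorem \ref{ckh} as an immediate corollary of the structure theorem in \cite{ov}, Theorem \ref{mt}, Remark \ref{ir_vai}(i), and Lemma \ref{kill_gau}, noting that the Vaisman metric is the Gauduchon metric. There is nothing to add.
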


\end{document}